\newtheorem{theorem}{Theorem}[section]
\newtheorem{lemma}[theorem]{Lemma}
\theoremstyle{definition}
\theoremstyle{remark}
\newtheorem{remark}[theorem]{Remark}
\numberwithin{equation}{section}
\newcommand{\R}{\mathbb{R}}
\newcommand{\C}{\mathbb{C}}
\newcommand{\N}{\mathbb{N}}
\newcommand{\cL}{\mathcal{L}}
\newcommand{\cV}{\mathcal{V}}
\newcommand{\cX}{\mathcal{X}}
\newcommand{\Dom}{\mathsf{Dom}}
\newcommand{\Ran}{\mathsf{R}}
\newcommand{\Nul}{\mathsf{N}}
\newcommand{\Proj}{\mathsf{P}}
\newcommand{\sppt}{\mathop{\mathrm{sppt}}}
\newcommand{\dist}{\mathop{\mathrm{dist}}}
\newcommand{\Lip}{\mathop{\mathrm{Lip}}}
\newcommand{\Div}{\mathop{\mathrm{div}}}
\newcommand{\re}{\mathop{\mathrm{Re}}}
\newcommand{\im}{\mathop{\mathrm{Im}}}
\begin{document}
\title[Finite Propagation Speed for First Order Systems]{Finite Propagation Speed for First Order Systems and Huygens' Principle for Hyperbolic Equations}
\author[McIntosh]{Alan McIntosh}
\author[Morris]{Andrew J. Morris}

\address{Alan McIntosh \\ Centre for Mathematics and its Applications\\ Australian National University\\ Canberra\\ ACT 0200\\ Australia\vspace{-.1cm}}
\email{alan.mcintosh@anu.edu.au}

\address{Andrew J. Morris\\ Department of Mathematics\\ University of Missouri\\ Columbia \\ MO 65211\\ USA\vspace{-.1cm}}
\email{morrisaj@missouri.edu}

\subjclass[2010]{Primary: 35F35, 35L20; Secondary: 47D06}
\keywords{Finite propagation speed, first order systems, $C_0$~groups, Huygens' principle, hyperbolic equations.}

\date{24 January 2012}

\begin{abstract}
We prove that strongly continuous groups generated by first order systems on Riemannian manifolds have finite propagation speed. Our procedure  provides a new direct proof for self-adjoint systems, and allows an  extension to operators on metric measure spaces. As an application, we present a new approach to the  weak Huygens' principle for second order hyperbolic equations.
\end{abstract}

\maketitle

\section{Introduction}
For a self-adjoint first order differential operator $D$ acting on a space $L^2(\mathcal{V})$, where  $\mathcal{V}$  is a vector bundle over a complete Riemannian manifold $M$, it is known that the unitary operators $e^{itD}$  act with finite propagation speed. Indeed, if the principal symbol satisfies 
\begin{equation}\label{symbol} |\sigma_D(x,\xi)|\leq \kappa |\xi| \qquad\forall x\in M,\ \forall \xi\in T^*_xM \end{equation}
for some positive number $\kappa$, and if $\sppt (u) \subset K\subset M$, then 
$$\sppt (e^{itD}u) \subset K_{\kappa |t|}: = \{x\in M \,;\, \dist(x,K)\leq \kappa |t|\}\qquad \forall t\in\R,$$
 that is, $e^{itD}$ has {\it finite propagation speed }$\kappa_D\leq\kappa$.
See, for instance, Propositions~10.2.11 and~10.3.1 in~\cite{HigsonRoe2000}.

Noting that  $\sigma_D(x,\nabla\eta(x))u(x)= [D,\eta I]u(x)$ for all bounded real-valued $C^1$ functions  $\eta$ on $M$,    we see that  condition \eqref{symbol} implies that the commutator $[\eta I,D]$ is a multiplication operator which satisfies the bound
\begin{equation}\label{comm}\|[\eta I, D]u\|_2 \leq \kappa \|\nabla \eta\|_\infty \|u\|_2\quad \forall u\in \Dom(D)\subset\Dom(D\eta),\end{equation}
where  $\Dom(D)$ denotes the domain of   $D$ and $[\eta I, D]u=\eta Du-D(\eta u)$.

On stating the result in terms of commutators, we can remove the differentiability assumptions altogether and consider operators $D$ defined on metric measure spaces instead. Our aim is to also weaken the self-adjointness condition on $D$ to the requirement that  $iD$ generates a $C_0$  group $(e^{itD})_{t\in\R}$   with 
\begin{equation}\label{group}\|e^{itD}u\|_2\leq c e^{\omega |t|}\|u\|_2\quad\forall t\in \R,\ \forall u \in L^2(\cV)
\end{equation}
 for some $c\geq1$ and $\omega\geq 0$.  (When $D$ is self-adjoint, condition \eqref{group} holds with $c=1$ and $\omega = 0$.)
 At the same time we could  replace  the $L^2$ space by an  $L^p$ space if we wished. This requires a new proof of finite propagation speed. Let us state the result here for Riemannian manifolds.

\begin{theorem} 
\label{Thm: MainThm1}
Let  $D$ be a first order differential operator which acts  on a space $L^2(\mathcal{V})$, where  $\mathcal{V}$  is a complex vector bundle  with a Hermitian metric, over a separable  Riemannian manifold $M$. Suppose that $iD$ generates a $C_0$ group $(e^{itD})_{t\in\R}$ satisfying \eqref{group} and that the commutators of $D$ with bounded real-valued $C^\infty$ functions $\eta$ on~$M$ satisfy \eqref{comm}. Then the group  $(e^{itD})_{t\in\R}$  has finite propagation speed    $\kappa_D\leq c\kappa $. 
\end{theorem}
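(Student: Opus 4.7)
The plan is to combine a Duhamel identity for $\eta\,e^{itD}u_0$ with an iteration over integer powers of a Lipschitz cutoff $\eta$ adapted to $K=\sppt u_0$. Fix $t>0$ and $\rho>0$; after smoothing $\dist(\cdot,K)$ by a partition-of-unity mollification, I would choose $\eta\in C^\infty(M;[0,1])$ with $\eta=0$ on a neighbourhood of $K$, $\eta=1$ on $\{\dist(\cdot,K)\geq\rho\}$, and $\|\nabla\eta\|_\infty\leq\lambda:=1/\rho$. In this differential-operator setting the identity $[D,\eta I]u=\sigma_D(\nabla\eta)u$ exhibits $[\eta I,D]$ as multiplication by the bounded function $-\sigma_D(\nabla\eta)$, and \eqref{comm} upgrades to the pointwise bound $|[\eta I,D]u(x)|\leq\kappa\lambda|u(x)|$ a.e. Together with the Leibniz identity $[\eta^n I,D]=n\eta^{n-1}[\eta I,D]$ (valid because $[\eta I,D]$ commutes with multiplication by $\eta$) this yields
\[
\bigl\|\,[\eta^n I,D]\,v\bigr\|_2 \;\leq\; \kappa n\lambda\,\|\eta^{n-1}v\|_2 \qquad \forall v\in\Dom(D),\ n\in\N.
\]

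Since $\eta^n$ is bounded and $C^\infty$, I would verify that multiplication by $\eta^n$ preserves $\Dom(D)$ and that $\eta^n u_t$ is strongly $C^1$ in $L^2$ with derivative $iD(\eta^n u_t)+i[\eta^n I,D]u_t$. Using $\eta^n u_0=0$, variation of constants gives
\[
\eta^n u_t \;=\; i\int_0^t e^{i(t-s)D}\,[\eta^n I,D]\,u_s\,ds,
\]
and estimating by \eqref{group} and the bound from the previous step,
\[
e^{-\omega t}\|\eta^n u_t\|_2 \;\leq\; c\kappa n\lambda \int_0^t e^{-\omega s}\|\eta^{n-1}u_s\|_2\,ds.
\]
Setting $\tilde g_n(t):=e^{-\omega t}\|\eta^n u_t\|_2$ and noting $\tilde g_0(t)\leq c\|u_0\|_2$, the recursion $\tilde g_n(t)\leq c\kappa n\lambda\int_0^t\tilde g_{n-1}(s)\,ds$ unrolls, the product $n!$ from the linear factors cancelling against $t^n/n!$ from the iterated integrals, to
\[
\|\eta^n u_t\|_2 \;\leq\; c\,e^{\omega t}\,(c\kappa\lambda t)^n\,\|u_0\|_2.
\]

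On the set $\{\eta=1\}=\{\dist(\cdot,K)\geq\rho\}$, the left-hand side dominates $\|u_t\,\mathbf{1}_{\{\eta=1\}}\|_2$, so whenever $\rho>c\kappa t$ (equivalently $c\kappa\lambda t<1$) the right-hand side tends to $0$ as $n\to\infty$, forcing $u_t=0$ a.e.\ on $\{\dist(\cdot,K)\geq\rho\}$. Letting $\rho\downarrow c\kappa t$ and running the identical argument with the group $(e^{-itD})_{t\geq0}$ for negative times yields $\sppt u_t\subset K_{c\kappa|t|}$, hence $\kappa_D\leq c\kappa$.

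The main obstacle I expect is the technical justification in the middle step: verifying that multiplication by $\eta^n$ preserves $\Dom(D)$, that $t\mapsto\eta^n u_t$ is strongly differentiable in $L^2$ with the claimed derivative, and that the variation-of-constants representation is legitimate for the non-self-adjoint generator $iD$ under only a $C_0$-group growth hypothesis rather than unitarity. Secondary technicalities are the construction of a smooth cutoff $\eta$ with $\|\nabla\eta\|_\infty\leq 1/\rho$ on a general separable Riemannian manifold (where separability enters through the partition of unity), and the passage from the $L^2$ commutator bound \eqref{comm} to the pointwise bound needed to extract the factor $\eta^{n-1}$ instead of $1$ when iterating.
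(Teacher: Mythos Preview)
Your argument is correct and delivers the same constant $c\kappa$, but it follows a genuinely different route from the paper's. The paper iterates commutators of the \emph{propagator}: it establishes the identity $[\eta I,e^{itD}]u=it\int_0^1 e^{istD}[\eta I,D]e^{i(1-s)tD}u\,ds$ (Lemma~\ref{Lem: CommForm}), introduces the derivation $\delta(S)=[\eta I,S]$, and proves by induction that $\|\delta^n(e^{itD})\|\leq (c|t|\,\|[\eta I,D]\|)^n c e^{\omega|t|}$; finite propagation then follows from $(e^{itD}u,v)=(-1)^n(\delta^n(e^{itD})u,v)$ with a cutoff equal to $1$ on $K$ and $0$ near $\sppt v$. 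You instead take the complementary cutoff ($\eta=0$ near $K$, $\eta=1$ far away), apply variation of constants to $\eta^n u_t$, and iterate in the exponent $n$ via the Leibniz rule $[\eta^n I,D]=n\eta^{n-1}[\eta I,D]$. Both schemes hinge on the same structural fact that $[\eta I,D]$ is a multiplication operator (equivalently $[\eta I,[\eta I,D]]=0$), and both produce the decisive factor $(c\kappa\lambda t)^n$; yours is closer to a classical energy/Gr\"onwall argument, the paper's is more operator-algebraic and avoids restricting first to $u_0\in\Dom(D)$.

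On your flagged obstacles: they are all routine. Domain preservation and strong differentiability of $\eta^n u_t$ follow from $\Dom(D)\subset\Dom(D\eta)$ (built into \eqref{comm}) and standard $C_0$-group theory; Duhamel is valid for any $C_0$ group, no self-adjointness needed. The smooth cutoff with $\|\nabla\eta\|_\infty\leq(1+\epsilon)/\rho$ exists on separable Riemannian manifolds (the paper cites \cite{AzagraFerreraLopezRangel2007}). Finally, you do not actually need the pointwise upgrade of \eqref{comm}: since $[\eta I,D]$ commutes with $\eta I$, one has $[\eta^n I,D]v=n[\eta I,D](\eta^{n-1}v)$, and \eqref{comm} applied to $\eta^{n-1}v\in\Dom(D)$ gives $\|[\eta^n I,D]v\|_2\leq n\kappa\lambda\|\eta^{n-1}v\|_2$ directly. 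To pass from $u_0\in\Dom(D)$ to general $u_0\in L^2(\cV)$ with $\sppt u_0\subset K$, approximate $u_0$ by $\chi u_0^{(k)}$ where $u_0^{(k)}\to u_0$ in $\Dom(D)$ and $\chi\in C^\infty$ equals $1$ on $K$ with $\sppt\chi\subset\{\eta=0\}$.
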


In particular, the operator $D$ acting on $L^2(\mathcal{V})$ could denote a first order system acting on $L^2(\R^n,\,\C^N)$ for some positive integers $n$ and $N$, or on $L^2(\Omega,\,\C^N)$ where $\Omega$ is an open subset of $\R^n$. We remark that the constants in  \eqref{comm} and  \eqref{group} could be with respect to another norm on $L^2(\cV)$ equivalent to the standard one. We shall return to this point.

In Section~\ref{sec:main}, we prove our main result, Theorem~\ref{Thm: MainThm2}, which is a generalisation of Theorem~\ref{Thm: MainThm1} to metric measure spaces.  The proof utilises a higher-commutator technique introduced by McIntosh and Nahmod in Section~2 of \cite{McIntoshNahmod2000}, and  used to derive off-diagonal estimates, otherwise known as Davies--Gaffney estimates, by Axelsson, Keith and McIntosh in Proposition~5.2 of~\cite{AKMc}, and by Carbonaro, McIntosh and Morris in Lemma~5.3 of~\cite{CMcM}. The proof also simplifies the argument based on energy estimates that is known for self-adjoint operators.  

A weak Huygens' principle for second order hyperbolic equations on $\R^n$ is proved as an application in Section~\ref{sec:app}. Homogeneous and inhomogeneous hyperbolic equations are treated separately. The homogeneous version in Theorem~\ref{hom} only requires the finite propagation speed result for self-adjoint first order systems. The inhomogeneous version in Theorem~\ref{inhom}, however, requires the generality of Theorem~\ref{Thm: MainThm2}. These results are achieved by introducing a first order elliptic system $BD$, where $D$ is a first order constant coefficient system, and $B$ is a multiplication operator, such that the second order hyperbolic equation contains a component of the system $(BD)^2$. This approach is motivated by the work of Auscher, McIntosh and Nahmod~\cite{AMcN1997}, and  of Axelsson, Keith and McIntosh~\cite{AKMc,AKMc2}, in which the solution of the Kato square root problem for second order elliptic operators is reduced to proving quadratic estimates for related first order elliptic systems. See also the survey by Auscher, Axelsson and McIntosh~\cite{AAMc}.
 
\section{Notation}\label{sec:notation}
	Henceforth, $M$ denotes a {\it metric measure space} with a metric $d(x,y)$ and a $\sigma$-finite Borel measure $\mu$. In particular, $M$ could be a Riemannian manifold as in the Introduction, or an open subset of $\R^n$ with Euclidean distance and Lebesgue measure. If $K, \tilde K\subset M$ and $x\in M$, then $d(x,K):=\inf\{d(x,y)\,;\,y\in K\}$ and $d(K,\tilde K): =\inf\{d(x,y)\,;\,x\in K, y\in \tilde K\} $. We define, for $\tau>0$, $K_\tau:=\{x\in M\,;\,d(x,K)\leq\tau\}$.  By $\Lip(M)$ we mean the space of all bounded real-valued functions $\eta$ on $M$ with finite Lipschitz norm $$\|\eta\|_{\Lip} = \sup_{x\neq y}\tfrac{|\eta(x)-\eta(y)|}{d(x,y)}\ .$$ Whenever $K\subset M$ and $\alpha>0$, the real-valued function $\eta_{K,\alpha}$ defined by
\begin{equation}\label{alpha} \eta_{K,\alpha}(x):=\max\{1-\alpha d(x,K)\,,\,0\}
\end{equation}
 belongs to $\Lip(M)$ with $\|\eta_{K,\alpha}\|_{\Lip}\leq \alpha$, and  $\sppt(\eta_{K,\alpha})\subset
 K_{1/\alpha}$.

A {\it vector bundle} $\cV$ over $M$ refers to a  complex vector bundle $\pi:\cV\to M$  equipped with a Hermitian metric $\langle \cdot , \cdot \rangle_x$ that depends continuously on $x\in M$. The examples to be presented in Section \ref{sec:special} will be  trivial bundles $M\times \C^m$ with inner product $\langle\zeta,\xi\rangle_x = \sum_j \zeta_j\overline\xi_j$. For every vector bundle $\cV$, there are naturally defined Banach  spaces $L^p(\cV)$, $1\leq p\leq \infty$, of measurable sections. In particular, $L^2(\mathcal{V})$ denotes the Hilbert space of square integrable  sections of $\mathcal{V}$ with the inner product $( u\,, v):= \int_M \langle u(x)\,, v(x) \rangle_x\, d\mu(x)$. In the case of the trivial bundle $M\times \C^m$, these are denoted as usual by $L^p(M,\C^m)$. 

The Banach algebra of all bounded linear operators on a Banach space $\cX$ is denoted by  $\cL(\cX)$.
Given $A\in L^\infty(M,\cL(\C^m))$, the same symbol $A$ is also used to denote the
 {\it multiplication operator} on $L^p(M,\C^m)$ defined by $u\mapsto A u$. Note that $\|A u\|_p\leq\|A\|_{\infty}\|u\|_p$.  Multiplication operators on $L^p(\cV)$ are defined in the natural way.  For any function $\eta\in \Lip(M)$, the multiplication operator $\eta I:L^p(\mathcal{V})\rightarrow L^p(\mathcal{V})$ is defined by $(\eta I)u(x):=\eta(x)u(x)$ for all $u\in L^p(\mathcal{V})$ and $\mu$-almost all $x\in M$. This is a multiplication operator by virtue of the facts that $\eta$ is bounded and continuous, and $\mu$ is a Borel measure.
The {\it commutator} $[A,T]$ of a multiplication operator $A$ with a (possibly unbounded) operator $T$ in $L^p(\cV)$ with domain $\Dom(T)$ is defined by $[A,T]u=ATu - TAu$ provided $u\in\Dom(T)\cap\Dom(TA)$.

Given an operator $D$ in $L^p(\cV)$ ($1\leq p<\infty$), we say that $iD$ {\it generates a} $C_0$ {\it group} $(V(t))_{t\in\R}$ provided $t\mapsto V(t)$ is a strongly continuous mapping from $\R$ to $\cL(L^p(\cV))$ with $V(s+t)=V(s)V(t)$, $V(0)=I$ (the identity map on $L^p(\cV)$) and $\frac{d}{dt}V(t)u|_{t=0}=iDu$ for all $u\in \Dom(D)=\{u\in L^p(\cV)\,; \frac{d}{dt}V(t)u|_{t=0} \text{ exists in } L^p(\cV) \} $. We write $V(t) = e^{itD}$. Such a group automatically has dense domain $\Dom(D)$, and satisfies an estimate of the form $\|e^{itD}\| \leq c e^{{\omega} |t|}$ for some $c\geq 1$ and $\omega\geq 0$. An introduction to the theory of strongly continuous groups can be found in, for instance, \cite{Kato} or \cite{EngelNagel2000}. We remark that, when $D$ is self-adjoint in $L^2(\cV)$, Stone's Theorem guarantees that the operators $e^{itD}$ are unitary, so $iD$ generates a $C_0$ group with $c=1$ and $\omega=0$.

The group $(e^{itD})_{t\in\R}$ is said to have \textit{finite propagation speed} when there exists a finite constant $\kappa\geq 0$, such that for all  $u\in L^p(\mathcal{V})$ satisfying $\sppt(u)\subset K\subset M$, and all $t\in \R$, it holds that $\sppt(e^{itD}u)\subset K_{\kappa|t|}$.
The \textit{propagation speed} $\kappa_D$ is  defined to be the least such $\kappa$.

\section{The Main Result}\label{sec:main}

The following theorem is the main result of the paper. Theorem~\ref{Thm: MainThm1} is proved as a special case at the end of the section.

\begin{theorem}\label{Thm: MainThm2}  Let $\mathcal{V}$ denote a  complex vector bundle over a metric measure space~$M$ and let $1\leq  p < \infty$.
 Suppose that $D:\Dom(D)\subset L^p(\mathcal{V})\rightarrow L^p(\mathcal{V})$ is a linear operator with the following properties:
\begin{enumerate}
\item\label{1} there exist finite constants $c\geq 1$ and $\omega\geq 0$ such that $iD$ generates a $C_0$ group $(e^{itD})_{t\in\R}$ in $L^p(\mathcal{V})$ with $\|e^{itD}u\|_p \leq c e^{{\omega} |t|}\|u\|_p\ \forall t\in\R, \forall u\in L^p(\cV)$;
\item\label{2} there exists a finite constant $\kappa>0$ such that for all $\eta\in \Lip(M)$, one has $\eta u \in \Dom(D)$ and $\|[\eta I,D]u\|_p \leq \kappa \|\eta\|_{\Lip}\|u\|_p$ and $[\eta I,[\eta I, D]]u=0$ for all $u\in \Dom(D)$.
\end{enumerate}
Then the group $(e^{itD})_{t\in\mathbb{R}}$  has finite propagation speed $\kappa_{D} \leq c\kappa$.
\end{theorem}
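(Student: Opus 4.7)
The plan is to reduce the problem, via a conjugation trick, to a bounded perturbation of the generator $iD$. Let $u \in \Dom(D)$ with $\sppt u \subset K$, fix any $x_0 \in M$ with $d(x_0, K) > c\kappa|t|$, and pick $r > 0$ small enough that $M_1 := d(x_0, K) - r$ still exceeds $c\kappa|t|$. I will show that $e^{itD} u$ vanishes $\mu$-a.e.\ on $B(x_0,r)$; since $x_0$ is arbitrary outside $K_{c\kappa|t|}$, this yields $\sppt(e^{itD}u) \subset K_{c\kappa|t|}$ as required.

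The central step is the conjugation identity
$$ e^{sA} D e^{-sA} = D + s[A,D] \qquad \text{on } \Dom(D), $$
where $A = \eta I$ for $\eta \in \Lip(M)$ real-valued and $s \in \R$. To prove it, I will expand $e^{-s\eta}$ as a power series; using the Leibniz rule and the hypothesis $[\eta I, [\eta I, D]] = 0$, induction on $k$ gives $[\eta^k I, D] = k \eta^{k-1}[\eta I, D]$, so the formal series for $[e^{-sA}, D]$ telescopes to $-s\, e^{-sA}[A, D]$. Since $e^{-s\eta}$ lies in $\Lip(M)$ for real $s$, assumption (2) ensures $e^{-sA}$ preserves $\Dom(D)$, and rearranging $D(e^{-sA}u) = e^{-sA}Du - [e^{-sA}I, D]u$ yields the displayed identity.

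Because $[A,D]$ is bounded with $\|[A,D]\|_{\cL(L^p(\cV))} \leq \kappa\|\eta\|_{\Lip}$, it follows that $i(D+s[A,D])$ is a bounded perturbation of $iD$ and therefore generates the $C_0$ group $e^{sA}e^{itD}e^{-sA}$; the standard bounded perturbation estimate for generators of $C_0$ groups then gives
$$\|e^{sA}e^{itD}e^{-sA}u\|_p \leq c\, e^{\omega|t|}\, e^{c\kappa|st|\|\eta\|_{\Lip}}\|u\|_p.$$
I now take $\eta(x) := \min\{d(x,K),\, M_1\}$, so $\|\eta\|_{\Lip} \leq 1$, $\eta \equiv 0$ on $K$ (hence $e^{-sA}u = u$), and $\eta \equiv M_1$ on $B(x_0,r)$. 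For $s > 0$ this yields
$$\int_{B(x_0,r)} |e^{itD}u|^p\, d\mu \leq e^{-psM_1}\,\|e^{sA}e^{itD}u\|_p^p \leq c^p e^{p\omega|t|}\, e^{ps(c\kappa|t| - M_1)}\|u\|_p^p,$$
and letting $s \to \infty$ drives the right-hand side to zero since $c\kappa|t| - M_1 < 0$.

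Finally, to handle general $u \in L^p(\cV)$ with $\sppt u \subset K$, I will approximate by $v_{n,\alpha} := \eta_{K,\alpha}\,u_n$, where $u_n \to u$ in $L^p(\cV)$ with $u_n \in \Dom(D)$ and $\alpha > 0$; assumption (2) gives $v_{n,\alpha} \in \Dom(D)$ with $\sppt(v_{n,\alpha}) \subset K_{1/\alpha}$, while $v_{n,\alpha} \to \eta_{K,\alpha}u = u$ in $L^p(\cV)$ as $n \to \infty$ (using $\eta_{K,\alpha} \equiv 1$ on $K \supset \sppt u$). Applying the step above to each $v_{n,\alpha}$ and taking $n \to \infty$ then $\alpha \to \infty$ closes the argument. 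The main obstacle is establishing the telescoping identity for $[e^{-sA}I,D]$: the higher-commutator hypothesis $[\eta I, [\eta I, D]] = 0$ is essential, and a careful Leibniz-type induction is needed before the conjugated group can be recognised as a bounded perturbation of $e^{itD}$.
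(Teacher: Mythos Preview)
Your proof is correct and takes a genuinely different route from the paper. The paper works with iterated commutators $\delta^n(e^{itD})$, where $\delta(S)=[\eta I,S]$: using a Duhamel formula for $\delta(e^{itD})$ (Lemma~\ref{Lem: CommForm}) together with $\delta([\eta I,D])=0$, it proves by induction that $\|\delta^n(e^{itD})\|\leq (c\kappa|t|\,\|\eta\|_{\Lip})^{n}\,c e^{\omega|t|}$, and then exploits $(e^{itD}u,v)=(-1)^n(\delta^n(e^{itD})u,v)$ for a cut-off with $\eta u=u$, $\eta v=0$, forcing the pairing to vanish as $n\to\infty$. Your exponential-weight (Davies-type) argument is the ``summed'' version of this: formally $e^{s\eta}e^{itD}e^{-s\eta}=\sum_n\frac{s^n}{n!}\delta^n(e^{itD})$, and your perturbation bound $ce^{\omega|t|}e^{c\kappa|st|\|\eta\|_{\Lip}}$ is precisely what the paper's estimate sums to. What you gain is directness---the conjugation identity plus the Phillips bounded-perturbation theorem (stated in the paper as Lemma~\ref{Lem: Group.Pert}, but used there only in Section~\ref{sec:special}) replaces the inductive commutator calculus---at the cost of importing that perturbation result, whereas the paper's argument is self-contained from Lemma~\ref{Lem: CommForm}. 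Both routes produce the same constant $c\kappa$. Two minor remarks: the power-series identity $[e^{-s\eta}I,D]=-s\,e^{-s\eta}[\eta I,D]$ should be made rigorous via partial sums and the closedness of $D$ rather than left ``formal'' (this is routine, since $\eta$ is bounded); and once the bound on $\|e^{s\eta}e^{itD}e^{-s\eta}\|$ is established as an operator-norm inequality it already applies to every $u\in L^p(\cV)$, so your final approximation step through $\Dom(D)$ is in fact unnecessary.
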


\begin{remark}
As the commutator $[\eta I,D]$ is bounded on the dense domain $\Dom(D)$, it extends uniquely to an operator (denoted by the same symbol)  $[\eta I,D]\in \cL(L^p(\cV))$ with the same bound.
\end{remark}

\begin{remark}\label{othernorm} The theorem remains true when the norm on $L^p(\cV)$ is replaced by another equivalent norm. This is used  later in Case II of Section \ref{sec:special}.
\end{remark}

\begin{remark} The results remain true in Bochner spaces $L^2(\cV)$ when the fibres of $\cV$ are infinite dimensional Hilbert spaces.
\end{remark}

For completeness, we prove a known formula for the  commutator $[\eta I,e^{itD}]$.

\begin{lemma}\label{Lem: CommForm}
Under the hypotheses of Theorem~\ref{Thm: MainThm2}, the following holds:
\[
[\eta I,e^{itD}]u = it \int_0^1 e^{istD} [\eta I,D] e^{i(1-s)tD}u\, d s \qquad\forall t\in\R, 
\forall u\in L^p(\cV).\]
\end{lemma}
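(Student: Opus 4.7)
My plan is to apply the fundamental theorem of calculus to the auxiliary family
\[
f(s) := e^{istD}(\eta I)\, e^{i(1-s)tD} u, \qquad s\in[0,1],
\]
where $t\in\R$ is fixed and, initially, $u\in\Dom(D)$. Note that $f(0)=\eta\, e^{itD}u$ and $f(1)=e^{itD}(\eta u)$, so that $f(0)-f(1)=[\eta I,e^{itD}]u$. The first step is therefore to show that $f$ is norm-continuously differentiable on $[0,1]$, with
\[
f'(s) = -it\, e^{istD}\,[\eta I,D]\, e^{i(1-s)tD} u.
\]
Integrating this identity from $0$ to $1$ and using $f(0)-f(1)=-\int_0^1 f'(s)\,ds$ yields the claimed formula on the dense subspace $\Dom(D)$.

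The derivative computation splits into the product rule for the two group factors. Since $(e^{itD})_{t\in\R}$ is a $C_0$ group generated by $iD$, it leaves $\Dom(D)$ invariant and commutes with $D$ there, so $e^{i(1-s)tD}u\in\Dom(D)$ for every $s$. Hypothesis~\eqref{2} then ensures $\eta\, e^{i(1-s)tD}u\in\Dom(D)$, which is precisely what is needed for the left-hand factor $e^{istD}$ to be differentiable at this vector with derivative $itD$ times itself. Similarly, the right-hand factor yields a derivative $-itD$, and combining them gives $f'(s)=e^{istD}\,it[D,\eta I]\,e^{i(1-s)tD}u = -it\,e^{istD}[\eta I,D]\,e^{i(1-s)tD}u$, as claimed. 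Strong continuity of $s\mapsto f'(s)$ follows from the strong continuity of both one-parameter groups and the fact that $[\eta I,D]$ extends, by the preceding remark, to a bounded operator on all of $L^p(\cV)$.

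The final step is to pass from $u\in\Dom(D)$ to arbitrary $u\in L^p(\cV)$ by density. Both sides of the identity are bounded in $u$: the left-hand side by $(1+c e^{\omega|t|})\|\eta\|_\infty$, and the right-hand side by $|t|\cdot c^2 e^{\omega|t|}\kappa\|\eta\|_{\Lip}$, using \eqref{1} and the bounded extension of $[\eta I,D]$. Since $\Dom(D)$ is dense in $L^p(\cV)$, the identity extends by continuity.

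I do not anticipate a serious obstacle: the only delicate point is verifying that the forward and backward difference quotients of $f$ converge in norm at each $s\in[0,1]$, which reduces to the routine check that on a vector of the form $\eta\, e^{i(1-s)tD}u\in\Dom(D)$ the group $e^{istD}$ has the expected strong derivative, together with uniform boundedness of $e^{istD}$ on compact $s$-intervals to control cross terms. Everything else is bookkeeping.
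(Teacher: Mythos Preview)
Your proposal is correct and follows essentially the same route as the paper: define $f(s)=e^{istD}(\eta I)e^{i(1-s)tD}u$ for $u\in\Dom(D)$, differentiate using the invariance of $\Dom(D)$ under the group together with hypothesis~\eqref{2} to obtain $f'(s)=-it\,e^{istD}[\eta I,D]e^{i(1-s)tD}u$, apply the fundamental theorem of calculus, and extend by density. The paper cites a chain-rule lemma from Engel--Nagel for the differentiation step, whereas you spell out the product-rule verification directly; your constant in the density bound for the left-hand side should be $2c\,e^{\omega|t|}\|\eta\|_\infty$ rather than $(1+c\,e^{\omega|t|})\|\eta\|_\infty$, but this is immaterial.
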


\begin{proof}
It suffices to verify the expression when $u\in\Dom(D)$. The property that $(e^{itD})_{t\in\R}$ is a $C_0$ group then guarantees that $e^{itD} u\in\Dom(D)$ with derivative $\tfrac d{dt}(e^{itD}u)=iDe^{itD}u=ie^{itD}Du$ for all $t\in\R$. The property that $\eta u \in \Dom(D)$ then implies that $e^{istD}(\eta I) e^{i(1-s)tD} u$ is differentiable with respect to $s$, with 
\begin{align*}
\frac d {ds}\Big(e^{is tD}(\eta I)  e^{i(1-s)tD} u\Big)
&= \Big(e^{i(\cdot)tD} (\eta I) e^{i(1-s)tD} u\Big)^\prime(s) + e^{istD} \Big((\eta I) e^{i(1-\,\cdot)tD} u\Big)^\prime(s)\\
&= -ite^{istD} [\eta I,D] e^{i(1-s)tD}u
\end{align*}
for all $s\in\R$. This version of the chain rule can be found in, for instance, Lemma~B.16 in~\cite{EngelNagel2000}. Using the fundamental theorem of calculus, we then have
\[ [\eta I,e^{itD}]u  = -\int_0^1\frac d {ds}\Big(e^{is tD}(\eta I)  e^{i(1-s)tD} u\Big)\, d s
= it \int_0^1 e^{istD} [\eta I,D] e^{i(1-s)tD}u\, d s\]
as required.
\end{proof}

\begin{proof}[Proof of Theorem \ref{Thm: MainThm2}]
Given $t\in\R$ and $u\in L^p(\cV)$ with $\sppt(u)\subset K \subset M$, our aim is to prove that $\sppt(e^{itD}u)\subset K_{c\kappa|t|}$. To do this, it suffices to prove that $(e^{itD}u,v)=0$ for all $v\in L^{p'}(\cV)$ with $d(\sppt(v), K)> c\kappa|t|$ (where $p'=\frac p {p-1}$). Let us fix $K, t, u, v$, and choose $\alpha>0$ such that  $c\kappa|t|<1/\alpha< d(\sppt(v), K)$. On defining the cut-off function $\eta:=\eta_{K,\alpha}\in\Lip(M)$ as  in \eqref{alpha}, we have $\eta u = u$, $\eta v = 0$ and $c\kappa|t|\|\eta\|_{\Lip}\leq c\kappa|t|\alpha <1$.

To simplify the computations, set $\delta:\cL(L^p(\cV))\to\cL(L^p(\cV))$ to be the {\it derivation} defined by $\delta(S)=[\eta I,S]$ for all $S\in\cL(L^p(\cV))$,  and adopt the convention that $\delta^0(S):=S$. We see that
\begin{equation}\label{easily}
(e^{itD}u,v)=(e^{itD}\eta^nu,v)=-( \delta(e^{itD})\eta^{(n-1)} u,v ) =\!\dots\!=
(-1)^n( \delta^n(e^{itD}) u,v )
\end{equation}
for all $n\in\N$. The derivation formula $\delta(ST)=\delta(S)T+S\delta(T)$ is readily verified for any $S,T\in\cL(L^p(\cV))$.
 Using Lemma~\ref{Lem: CommForm} and the fact, given by property~\eqref{2} of~$D$, that $\delta([\eta I, D])=[\eta I, [\eta I, D]]=0$, we then obtain
\begin{equation}\label{eq: ind.In}
\delta^{m+1}(e^{itD})u = it\int_0^1 \sum_{k=0}^{m} {\textstyle \binom{m}{k}} \delta^{m-k}(e^{istD}) [\eta I, D] \delta^k(e^{i(1-s)tD})u\, d{s}
\end{equation}
for all $m\in\N_0:=\N\cup\{0\}$, where the binomial coefficient $\binom{m}{k}:=\frac{m!}{k!(m-k)!}$. We now prove by induction that
\begin{equation}\label{eq: ind.NormIn}
\|\delta^n(e^{itD})\| \leq  (c\,|t|\,\|[\eta I,D]\|)^n ce^{{\omega}|t|}
\end{equation}
for all $n\in\N_0$. For $n=0$, this is given by property~\eqref{1} of~$D$. Now let $m\in\N$ and suppose that \eqref{eq: ind.NormIn} holds for all integers $n\leq m$. We then use~\eqref{eq: ind.In} to obtain
\begin{align*}
\|\delta^{m+1}(e^{itD})\| &\leq |t| \int_0^1 \sum_{k=0}^{m} {\textstyle \binom{m}{k}} \|\delta^{m-k}(e^{istD})\|\, \|[\eta I, D]\|\, \|\delta^{k}(e^{i(1-s)tD})\| \, d{s} \\
&\leq  (c\,|t|\,\|[\eta I,D]\|)^{m+1} ce^{{\omega}|t|} \int_0^1 \sum_{k=0}^{m} {\textstyle \binom{m}{k}} {s}^{m-k} (1-{s})^k  \, d{s} \\
&=  (c\,|t|\,\|[\eta I,D]\|)^{m+1} ce^{{\omega}|t|} \int_0^1 ({s} + (1-{s}))^{m}  \, d{s} \\
&=  (c\,|t|\,\|[\eta I,D]\|)^{m+1} ce^{{\omega}|t|}\ .
\end{align*}
 This proves~\eqref{eq: ind.NormIn} for all $n\in\N_0$.

Therefore, using the estimate \eqref{eq: ind.NormIn} in \eqref{easily}, together with property~\eqref{2}, we obtain
\[
|( e^{itD}u,v )| \leq   (c\kappa|t|\|\eta\|_{\Lip})^n ce^{{\omega}|t|} \|u\|_p\|v\|_{p'}
\leq   (c\kappa|t|\alpha)^n ce^{{\omega}|t|} \|u\|_p\|v\|_{p'}
\] for all $n\in\N_0$. We have $ {c} \kappa|t|\alpha<1$, so   $( e^{itD}u,v )=0$ as required.
\end{proof}

 \begin{remark}
In fact we have proved the stronger statement that \[\sppt(e^{itD}u)\subset \tilde K_t(D)\subset K_{c\kappa   |t|},\] where $\tilde K_t(D) =\cap\{\sppt(\eta)\,;\,\eta\in \Lip(M)\,,\,\eta\equiv1   \text{ on }  K\,,\,c|t|\|[\eta I,D]\|_\infty< 1\}.$

For example, if $M=\R^n$  and $\frac{\partial}{\partial x_1}$  does not appear in $D$, then there is no propagation in the $x_1$   direction. See the recent paper of Cowling and Martini \cite{CM} for some related results.\end{remark}

We conclude the section by proving Theorem~\ref{Thm: MainThm1}.

\begin{proof}[Proof of Theorem~\ref{Thm: MainThm1}] 
From what was explained in the introduction, together with the fact that $[\eta I,[\eta I, D]]=0$ when $[\eta I, D]$ is a multiplication operator, Lemma~\ref{Lem: CommForm} also holds under the hypotheses of Theorem \ref{Thm: MainThm1}. We then consider $\epsilon>0$ and repeat the proof of Theorem~\ref{Thm: MainThm2}, replacing the cut-off function ${\eta_{K,\alpha}\in\Lip(M)}$ with a $[0,1]$-valued function $\tilde{\eta}\in C^\infty(M)$ satisfying $\tilde\eta=1$ on~$K$, $\sppt(\tilde\eta)\subset K_{1/\alpha}$, and $\|\nabla\tilde\eta\|_\infty\leq(1+\epsilon)\|\eta_{K,\alpha}\|_{\Lip}$. These approximations exist because the Riemannian manifold $M$ is separable (see, for instance, Corollary~3 in~\cite{AzagraFerreraLopezRangel2007}). The result follows because $\epsilon>0$ can be chosen arbitrarily.
\end{proof}

\begin{remark}
The only reason that the Riemannian manifold $M$ was required to be separable in Theorem~\ref{Thm: MainThm1}, was so that we could construct smooth approximations to Lipschitz functions in the proof above. Indeed, Theorem~\ref{Thm: MainThm2} provides an analogous result without requiring separability.
\end{remark}

\section{Some Special Cases}\label{sec:special}

A typical example of a first order system is the Hodge--Dirac operator $D=\delta+\delta^*$ acting on $L^2(\Lambda(M))$ when $M$ is a complete Riemannian manifold. For this operator, the group $(e^{itD})_{t\in\R}$ has finite propagation speed 1. We shall consider the case when the manifold is $\R^n$ or an open subset thereof, and restrict attention to the leading components of the Hodge--Dirac  operator (the components acting between scalar-valued functions and vector fields), and perturbations thereof. For this purpose, when $\Omega$ is an open subset of $\R^n$, we require the Sobolev space $W^{1,2}(\Omega)$ consisting of all $f$ in $L^2(\Omega)$ with generalised derivatives satisfying
\[
\|f\|_{W^{1,2}(\Omega)}^2:=\|f\|_{L^2(\Omega)}^2 + \|\nabla f\|_{L^2(\Omega)}^2<\infty,
\]
where $\nabla f = (\partial_jf)_{j=1,\ldots,n}$.

\medskip
\noindent \textbf{Case I.} \quad
Let $M=\R^n$ ($n\in\N$), and let  $D$ denote the self-adjoint operator  $$ D=\left[\begin{array}{cc}
0&-\Div \\ \nabla &0 \end{array}\right]:\begin{array}{c}W^{1,2}(\R^n)\\ \oplus\\ \Dom(\Div)\end{array}\subset \begin{array}{c}L^2(\R^n)\\ \oplus\\ L^2(\R^n,\C^n)\end{array}\to \begin{array}{c}L^2(\R^n)\\ \oplus\\ L^2(\R^n,\C^n),\end{array}
$$
where $\nabla: f\mapsto (\partial_jf)_j$ has domain $ W^{1,2}(\R^n)$, and $\Div=-\nabla^* :(u_j)_j\mapsto \sum_j\partial_ju_j$ has  domain  $\{u\in L^2(\R^n,\C^n)\,;\, \Div { u} \in L^2(\R^n)\}$.

It follows from known results that $(e^{itD})_{t\in\R}$ has finite propagation speed 1. It is also a consequence of Theorem~\ref{Thm: MainThm2} with $c=1$ and $\omega =0$ because $D$ is self-adjoint, and with $\kappa=1$ because (using $\|\nabla\eta\|_{\infty} = \|\eta\|_{\Lip}$) we have
$$\left\|[\eta I,D] \left[\begin{array}{c}
f\\(u_j)_j \end{array}\right]\right\|_2 = \left\|\left[\begin{array}{c}
\sum_j (\partial_j\eta) u_j\\-(\partial_j\eta)_j f\end{array}\right]\right\|_2\leq\|\eta\|_{\Lip}\left\|\left[\begin{array}{c}
f\\(u_j)_j \end{array}\right]\right\|_2\ \ \forall\eta\in \Lip(\R^n).
$$

\medskip
\noindent \textbf{Case II.} \quad
Let $D$ denote the operator in Case I, and consider the perturbed operator $BD$ with the same domain, where $B\in L^\infty(\R^n,\cL(\C^{1+n}))$ satisfies $\langle B(x)\zeta,\zeta\rangle \geq \lambda|\zeta|^2$ for a.e.~$x\in\R^n$ and all $\zeta\in\C^{1+n}$, for some $\lambda>0$.
The multiplication operator $B$ is a strictly positive self-adjoint operator in $L^2(\R^n,\C^{1+n})$, since $(Bu,u)\geq\lambda\|u\|^2$. Hence $\|B^{1/2}\|=\|B\|^{1/2}={\|B\|_\infty}^{1/2}$ and $B^{-1}$, $B^{-1/2}$ both exist as bounded operators.

Using these facts, we find that $BD$ is self-adjoint in $L^2(\R^n,\C^{1+n})$ under the  inner product $(u\,,v)_B:=(B^{-1}u\,,v)$, whose associated norm $\|u\|_B=\|B^{-1/2}u\|$ is equivalent to $\|u\|$.  So $iBD$ generates a $C_0$ group $(e^{itBD})_{t\in\R}$ in $L^2(\R^n,\C^{1+n})$ with
$$
\|e^{itBD}u\|\leq{\|B\|_\infty}^{1/2} \|e^{itBD}u\|_B\leq{\|B\|_\infty}^{1/2} \|u\|_B\leq\lambda^{-1/2}{\|B\|_\infty}^{1/2}\|u\|\ .
$$ 
 The commutator $[\eta I, BD]=B[\eta I,D]$ is a multiplication operator which  satisfies $\|[\eta I,BD]u\| \leq \|B\|_\infty\|\eta\|_{\Lip}\|u\|$, so by Theorem~\ref{Thm: MainThm2}, the group $(e^{itBD})_{t\in\R}$ has finite propagation speed $\kappa_{BD}\leq \lambda^{-1/2}{\|B\|_\infty}^{3/2}$. 

Actually, this  can be improved. As noted in Remark \ref{othernorm},   the equivalent norm $\|u\|_B$ on $L^2(\R^n,\C^{1+n})$ can be used in the proof of Theorem \ref{Thm: MainThm2}. The operator $BD$ is self-adjoint in this norm, and  $\|[\eta I,BD]u\|_B =\|B^{-1/2}B[\eta I,D]u\|\leq \|B\|_\infty\|\eta\|_{\Lip}\|u\|_B$, so we conclude that $(e^{itBD})_{t\in\R}$ has finite propagation speed $\kappa_{BD}\leq \|B\|_\infty$.

\medskip
\noindent \textbf{Case III.} \quad Now we allow inhomogeneous terms, and consider an operator of the form $BD$ acting on an open subset $\Omega\subset\R^n$. Suppose that $V$ is a closed subspace of $W^{1,2}(\Omega)$ which contains $C^\infty_c$ (the $C^\infty$ functions with compact support), and which has the property that $\eta f\in V$ for all $\eta\in \Lip(\Omega)$ and $f\in V$. For example, the space $W^{1,2}(\Omega)$ itself has this property, as does $W^{1,2}_0(\Omega)$ (the closure of $C^\infty_c(\Omega)$ in $W^{1,2}(\Omega)$). (This last statement follows from the facts that, given $\eta\in \Lip(\Omega)$, then  $\eta f \in W^{1,2}_0(\Omega)$ for all $f\in C^\infty_c(\Omega)$ and $\eta I \in \cL(W^{1,2}(\Omega))$.) 

Define $\nabla_V:V\subset L^2(\Omega)\to L^2(\Omega,\C^n)$ by $\nabla_Vf=(\partial_j f)_j$, and set ${\Div_V = -{\nabla_V}^*}$. That is, $\Div_Vu=\Div u$ for all $u\in\Dom(\Div_V)=\{u\in L^2(\Omega,\C^n)\,;\, \Div u \in L^2(\Omega)$ and $(-\nabla f\,,u)=(f\,,\Div u) \ \forall f\in V\}$. In particular, we have $\eta u \in \Dom(\Div_V)$ for all $\eta\in \Lip(\Omega)$ and $u \in \Dom(\Div_V)$.

Define the self-adjoint operator
$$ D=\left[\begin{array}{ccc}
0&I&-\Div_V \\I&0&0\\ \nabla_V &0&0 \end{array}\right]:\begin{array}{c}V\\ \oplus\\L^2(\Omega)\\ \oplus\\ \Dom(\Div_V)\end{array}\subset \begin{array}{c}L^2(\Omega)\\ \oplus\\L^2(\Omega)\\ \oplus\\ L^2(\Omega,\C^n)\end{array}\to \begin{array}{c}L^2(\Omega)\\ \oplus\\L^2(\Omega)\\ \oplus\\ L^2(\Omega,\C^n)\end{array}
$$
and the multiplication operator
$$B=\left[\begin{array}{ccc}
a&0&0\\0&A_{00}&(A_{0k})\\0&(A_{j0})&(A_{jk})
\end{array}\right]\in L^\infty(\Omega,\cL(\C^{2+n}))
$$
with $a(x)\geq \lambda>0$ and $\sum_{j,k=1}^nA_{jk}(x)\zeta_k\overline\zeta_j\geq\lambda|\zeta|^2$ for a.e.~$x\in\Omega$ and all $\zeta\in\C^n$, for some $\lambda>0$.

The operator $BD$ satisfies~\eqref{comm} as in Case II. If $A$, and hence $B$, were positive self-adjoint, then $iBD$ would generate a $C_0$ group as before, but we have only assumed this for the matrix-valued function  $(A_{jk})$ with $j,k\in\{1,\ldots,n\}$.
We remedy this by writing 
$$BD = \left[\begin{array}{ccc}
0&a&-a\Div\\
A_{00}+\sum_{k} A_{0k}\partial_k&0&0\\
(A_{j0}+\sum_{k} A_{jk}\partial_k)&0&0 \end{array}\right]=\tilde B D+C\ ,
$$ 
where $\tilde B=\left[\begin{array}{ccc}
a&0&0\\0&\re A_{00} +\alpha&(A_{0k})\\0&({\overline {A_{0j}}})&(A_{jk})
\end{array}\right]$, $C=\left[\begin{array}{ccc}
0&0&0\\i \im A_{00}-\alpha &0&0\\(A_{j0}-\overline {A_{0j}})&0&0
\end{array}\right]$, and $\alpha>0$ is chosen large enough so that $\langle \tilde B(x)\zeta,\zeta\rangle \geq \frac\lambda2|\zeta|^2$ for a.e.$~x\in\Omega$ and all $\zeta\in \C^{2+n}$. As in Case II, we see that $\tilde BD$ is self-adjoint in $L^2(\Omega,\C^{2+n})$ under the inner product $(u\,,v):=(\tilde B^{-1}u\ ,v)$, so $i\tilde BD$ generates a $C_0$ group with $\|e^{it\tilde BD}u\|_{\tilde B}\leq\|u\|_{\tilde B}$. Lemma~\ref{Lem: Group.Pert} below then allows us to deduce that $iBD=i\tilde BD+iC$ generates a $C_0$ group $(e^{itBD})_{t\in\R}$ with 
$$\|e^{itBD}u\|\leq {\|\tilde B\|_\infty}^{1/2}\|e^{it(\tilde BD+C)}u\|_{\tilde B}\leq {\|\tilde B\|_\infty}^{1/2} e^{\tilde{\omega}|t|}\|u\|_{\tilde B}\leq \tilde{c} e^{\tilde{\omega}|t|}\|u\|$$
for some finite $\tilde{c}\geq1$ and $\tilde{\omega}\geq0$. By Theorem~\ref{Thm: MainThm2}, we conclude that $(e^{itBD})_{t\in\R}$ has finite propagation speed.

\begin{lemma}\label{Lem: Group.Pert}
Let $\mathcal{X}$ be a Banach space, and suppose that $T:\Dom(T)\subset \mathcal{X}\rightarrow \mathcal{X}$ is a linear operator that generates a $C_0$ group $(e^{tT})_{t\in\R}$ in $\mathcal{X}$ satisfying $\|e^{tT}\| \leq ce^{\omega |t|}$ for some $c\geq1$ and $\omega\geq0$. If $B\in \cL(\cX)$, then the sum $T+B$ on $\Dom(T)$ generates a $C_0$ group $(e^{t(T+B)})_{t\in\R}$ on $\mathcal{X}$ satisfying  $\|e^{t(T+B)}\| \leq ce^{(\omega + c\|B\|)|t|}$.
\end{lemma}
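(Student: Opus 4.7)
The plan is to invoke the classical bounded perturbation theorem for $C_0$ semigroups twice, once for $T$ and once for $-T$, and then patch the two resulting semigroups into a $C_0$ group. Since $T$ generates a $C_0$ group with $\|e^{tT}\| \leq c e^{\omega|t|}$, both $T$ and $-T$ generate $C_0$ semigroups $(e^{tT})_{t \geq 0}$ and $(e^{-tT})_{t \geq 0}$, each bounded by $c e^{\omega t}$ on $[0,\infty)$.

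For $t \geq 0$, I would construct the perturbed semigroup via the Dyson--Phillips series. Set $U_0(t) = e^{tT}$ and recursively define
\[
U_{n+1}(t) := \int_0^t e^{(t-s)T} B\, U_n(s)\, ds \qquad (t \geq 0).
\]
A routine induction gives the norm estimate
\[
\|U_n(t)\| \leq c^{n+1} \|B\|^n e^{\omega t}\, \frac{t^n}{n!},
\]
so the series $U(t) := \sum_{n=0}^\infty U_n(t)$ converges in $\mathcal{L}(\mathcal{X})$ uniformly on compact subsets of $[0,\infty)$, and summing the geometric-type series yields
\[
\|U(t)\| \leq c \sum_{n=0}^\infty \frac{(c\|B\|t)^n}{n!}\, e^{\omega t} = c\, e^{(\omega + c\|B\|) t}.
\]
Standard manipulations of the series show that $U$ satisfies the variation-of-constants identity $U(t) = e^{tT} + \int_0^t e^{(t-s)T} B\, U(s)\, ds$, from which one extracts strong continuity, the semigroup law $U(t+s) = U(t)U(s)$, and the fact that the generator equals $T+B$ on $\Dom(T+B) = \Dom(T)$ (since $B$ is everywhere defined and bounded).

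Applying the identical construction to $-T$ perturbed by $-B$ yields a second $C_0$ semigroup $(V(t))_{t \geq 0}$, generated by $-(T+B)$ on $\Dom(T)$ and satisfying $\|V(t)\| \leq c\, e^{(\omega + c\|B\|) t}$ for $t \geq 0$. Using the integral equations for $U$ and $V$ on the dense invariant domain $\Dom(T)$, one checks that $V(t)U(t) = U(t)V(t) = I$ for all $t \geq 0$, so setting
\[
e^{t(T+B)} := \begin{cases} U(t), & t \geq 0,\\ V(-t), & t < 0,\end{cases}
\]
defines a $C_0$ group on $\mathcal{X}$ generated by $T+B$, and the bound $\|e^{t(T+B)}\| \leq c\, e^{(\omega + c\|B\|)|t|}$ follows directly from the two semigroup estimates.

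The main technical point — and the only step requiring care — is verifying that the generator of the semigroup $U(t)$ is \emph{exactly} $T+B$ on $\Dom(T)$, rather than some proper extension. This is handled by showing, via the integral equation, that for $u \in \Dom(T)$ the limit $\lim_{t \downarrow 0}(U(t)u-u)/t$ exists and equals $(T+B)u$, while conversely if $u \in \mathcal{X}$ is in the domain of the generator then the integral equation forces $e^{tT}u$ to be differentiable at $0$, i.e.\ $u \in \Dom(T)$. Everything else reduces to the bookkeeping above, which is why citing the bounded perturbation theorem from, e.g., \cite{EngelNagel2000} or \cite{Kato} is the cleanest way to close the argument.
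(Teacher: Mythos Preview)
Your proposal is correct and matches the paper's own treatment: the paper does not give a self-contained proof of this lemma but simply remarks that it is a well-known result of Phillips and that the semigroup proof in Theorem~III.1.3 of \cite{EngelNagel2000} (the Dyson--Phillips series for bounded perturbations) can be extended to groups. Your argument is exactly that extension---apply the Dyson--Phillips construction to both $T$ and $-T$, obtain the stated bound on each half-line, and splice the two semigroups into a group---so you have supplied precisely the details the paper leaves to the reader.
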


Lemma~\ref{Lem: Group.Pert} is a well known result based on the work of Phillips in~\cite{Phillips1953}. The proof for semigroups in Theorem III.1.3 of~\cite{EngelNagel2000} can be extended to give the above result.

\section{Weak Huygens' Principle}\label{sec:app}

In this section, we apply Theorem~\ref{Thm: MainThm2} to prove a weak Hugyens' principle for second order hyperbolic equations. For motivation, we start with the wave equation on $\R^n$. A homogeneous equation with bounded measurable coefficients is treated next, followed by an inhomogeneous version on a domain  $\Omega\subset \R^n$.

In Case I, we considered the operator $D=\left[\begin{array}{cc}0&-\Div \\ \nabla &0 \end{array}\right]$ in $\begin{array}{c}L^2(\R^n)\\ \oplus\\ L^2(\R^n,\C^n)\end{array}$, which is self-adjoint, and noted that $iD$ generates a $C_0$ group $(e^{itD})_{t\in\R}$ with finite propagation speed 1. 
Consequently, the cosine family $\cos(tD)= \frac{1}{2}(e^{itD}+e^{-itD})$  ($t\geq0$) also has finite propagation speed 1, where this is defined in the obvious way. Note that the cosine operators, being even functions of $D$, satisfy $\cos(tD)=\cos(t\sqrt{D^2}$), where 
$D^2=\left[\begin{array}{cc}-\Delta&0 \\ 0 &-\nabla \Div \end{array}\right]$ and $\Delta=\Div\nabla$ denotes the Laplacian operator with domain $\Dom(\Delta)=\{f\in W^{1,2}(\R^n)\,;\,\nabla f \in \Dom(\Div)\}$. On restricting attention to the first component, we deduce that the cosine family
$(\cos(t\sqrt{-\Delta}))_{t\geq0}$ has finite propagation speed 1.
This is at the heart of the weak Huygens' principle for the wave equation:

\begin{theorem} If $f \in W^{1,2}(\R^n)$, $g\in L^2(\R^n)$  with $\sppt(f) \cup\sppt(g)\subset K \subset \R^n$, then the solution
 $$F(t) = \cos(t\sqrt{-\Delta}) f+\int_0^t\cos(s\sqrt{-\Delta})g\,d{s} $$ of the Cauchy problem 
 \begin{align*}
 \tfrac{\partial^2}{\partial t^2}F(t) - \Delta F(t) &= 0\qquad (t>0)\\
 \lim_{t\to0}F(t)&=f\\
 \lim_{t\to0} \tfrac{\partial}{\partial t} F(t) &= g
 \end{align*} 
 has support $\sppt(F(t))\subset K_t$.
\end{theorem}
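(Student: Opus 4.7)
The plan is to decompose $F(t)$ into its two summands and exploit the finite propagation speed of the cosine family $(\cos(t\sqrt{-\Delta}))_{t\ge 0}$, which the excerpt has already deduced from Case~I by restricting $\cos(tD)=\tfrac12(e^{itD}+e^{-itD})$ to the first component of the first order system. The statement then breaks naturally into two verifications: that $F$ is indeed a solution of the Cauchy problem, and that each summand has support contained in $K_t$.

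For the first verification I would invoke the spectral theorem for $-\Delta$: it is non-negative self-adjoint on $L^2(\R^n)$ with $\Dom(\sqrt{-\Delta})=W^{1,2}(\R^n)$, so the functional calculus delivers the strongly continuous cosine family together with its sine companion. Differentiating the given formula for $F$, interchanging the $t$-derivative and the integral, produces $F'(t)=-\sqrt{-\Delta}\sin(t\sqrt{-\Delta})f+\cos(t\sqrt{-\Delta})g$, so that $F(0)=f$ and $F'(0)=g$; a second differentiation returns $\Delta F(t)$. At the minimal regularity $f\in W^{1,2}(\R^n)$, $g\in L^2(\R^n)$ the equation $\partial_t^2F=\Delta F$ must of course be read in a mild/distributional sense, which is standard second order Cauchy problem theory and can simply be cited.

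For the support statement, the homogeneous term is immediate: finite propagation speed $1$ of the cosine family gives $\sppt(\cos(t\sqrt{-\Delta})f)\subset K_{|t|}\subset K_t$. For the inhomogeneous term, the same bound yields $\sppt(\cos(s\sqrt{-\Delta})g)\subset K_s\subset K_t$ for every $s\in[0,t]$. The $L^2$-valued Bochner integral then inherits this support property: for any test function $\varphi\in C^\infty_c(\R^n\setminus K_t)$, Fubini's theorem gives
\[
\left(\int_0^t\cos(s\sqrt{-\Delta})g\, ds\,,\varphi\right)=\int_0^t(\cos(s\sqrt{-\Delta})g\,,\varphi)\, ds=0,
\]
so the integral term also vanishes outside $K_t$, and hence so does $F(t)$.

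The main (and rather mild) obstacle is this passage from pointwise support of the integrand in the parameter $s$ to support of the Bochner integral, together with the care required to interpret $F$ as a genuine solution at the stated minimal regularity. All of the geometric content is supplied by the finite propagation speed of the first order system $D$ provided by Theorems~\ref{Thm: MainThm1} and~\ref{Thm: MainThm2}; no further commutator estimates are needed.
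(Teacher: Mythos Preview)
Your proposal is correct and follows exactly the route the paper intends: the paper itself does not write out a proof of this theorem, but simply states that the finite propagation speed of $(\cos(t\sqrt{-\Delta}))_{t\geq0}$, obtained by restricting $\cos(tD)=\tfrac12(e^{itD}+e^{-itD})$ to the first component, ``is at the heart of the weak Huygens' principle for the wave equation'' and then declares the result ``very well known.'' Your decomposition into the two summands, the support argument for the Bochner integral via pairing with test functions, and the appeal to standard cosine-family theory for the solution property are precisely the details one would supply, and there is nothing to correct.
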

This result is very well known. 
The solution $F$ belongs to 
$C^1(\R^+,L^2(\R^n))\cap C^0(\R^+, W^{1,2}(\R^n))$. There is a considerable literature on the wave equation, so we shall not proceed further  with statements of uniqueness, energy estimates, etc.

We turn now to the  corresponding result for homogeneous equations with $L^\infty$ coefficients.
 Let  $L = - a\Div A \nabla = -a\sum_{j,k=1}^n\partial_j A_{jk}\partial_k$,  where $a\in L^\infty(\R^n)$ 
and  $A=(A_{jk}) \in L^\infty(\R^n,\cL(\C^n))$ with $a(x)\geq \lambda>0$ and $\langle A(x)\zeta,\zeta\rangle\geq\lambda|\zeta|^2$ for a.e.~$x\in\R^n$ and all $\zeta\in\C^n$. Here $L:\Dom(L)\subset L^2(\R^n)\to L^2(\R^n)$ with $\Dom(L)=\{f\in W^{1,2}(\R^n)\,;\,A\nabla f\in \Dom(\Div)\}$.

\begin{theorem}\label{hom} If $f \in W^{1,2}(\R^n)$, $g\in L^2(\R^n)$ with $\sppt(f) \cup\sppt(g)\subset K \subset \R^n$, then the solution
 $$F(t) = \cos(t\sqrt{L}) f+\int_0^t\cos(s\sqrt{L})g\,d s    \in C^1(\R^+,L^2(\R^n))\cap C^0(\R^+, W^{1,2}(\R^n))$$ of the Cauchy problem 
 \begin{align*}
 \tfrac{\partial^2}{\partial t^2}F(t) +L F(t) &= 0\qquad (t>0)\\
  \lim_{t\to0}F(0)&=f\\   \lim_{t\to0}
 \tfrac{\partial}{\partial t} F(0) &= g
 \end{align*} 
 has support $\sppt(F(t))\subset K_{\alpha t}$, where $\alpha:=(\|a\|_\infty\|A\|_\infty)^{1/2}$. \end{theorem}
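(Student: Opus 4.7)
My plan is to realise the cosine evolution $\cos(t\sqrt L)$ as the upper-left diagonal entry of a cosine family built from a perturbed first-order Hodge--Dirac operator, and then apply the finite propagation speed result of Section~\ref{sec:special}. Take $D = \left[\begin{smallmatrix} 0 & -\Div \\ \nabla & 0 \end{smallmatrix}\right]$ as in Case~I, and for a scaling parameter $\beta > 0$ to be fixed later, introduce the multiplication operator $B_\beta := \left[\begin{smallmatrix} \beta a & 0 \\ 0 & \beta^{-1}A \end{smallmatrix}\right] \in L^\infty(\R^n, \cL(\C^{1+n}))$ on $L^2(\R^n,\C^{1+n})$. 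A direct calculation gives
\[
(B_\beta D)^2 = \left[\begin{array}{cc} -a\Div A\nabla & 0 \\ 0 & -A\nabla a\Div \end{array}\right] = \left[\begin{array}{cc} L & 0 \\ 0 & -A\nabla a\Div \end{array}\right],
\]
so $L$ appears as the upper-left diagonal block \emph{independently of $\beta$}, the factors $\beta$ and $\beta^{-1}$ cancelling pointwise because $\beta$ is a scalar constant.

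Next I apply Case~II of Section~\ref{sec:special} (or, if $A$ is not pointwise Hermitian, the perturbation argument from Case~III combined with Lemma~\ref{Lem: Group.Pert}) to conclude that $iB_\beta D$ generates a $C_0$ group $(e^{itB_\beta D})_{t\in\R}$ on $L^2(\R^n,\C^{1+n})$ with finite propagation speed
\[
\kappa_{B_\beta D} \leq \|B_\beta\|_\infty = \max\bigl(\beta\|a\|_\infty,\,\beta^{-1}\|A\|_\infty\bigr).
\]
Choosing $\beta := (\|A\|_\infty/\|a\|_\infty)^{1/2}$ equalises the two quantities and yields $\kappa_{B_\beta D}\leq\alpha$.

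With $\beta$ so fixed, define the cosine family $\cos(tB_\beta D) := \tfrac12(e^{itB_\beta D} + e^{-itB_\beta D})$, which inherits finite propagation speed at most $\alpha$, and set $U(t) := \cos(tB_\beta D)\left[\begin{smallmatrix} f \\ 0 \end{smallmatrix}\right] + \int_0^t \cos(sB_\beta D)\left[\begin{smallmatrix} g \\ 0 \end{smallmatrix}\right] \,d s$. Since $(B_\beta D)^2$ is block-diagonal, the Cauchy problem $\partial_t^2 U + (B_\beta D)^2 U = 0$ decouples across components; initial data with vanishing second component force the second component of $U$ itself to vanish identically, so the first component $F$ is the unique element of $C^1(\R^+,L^2(\R^n))\cap C^0(\R^+,W^{1,2}(\R^n))$ solving $\partial_t^2 F + LF = 0$ with $F(0) = f$ and $\partial_t F(0) = g$; that is, $F(t) = \cos(t\sqrt L)f + \int_0^t \cos(s\sqrt L)g\,d s$. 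Because both $\left[\begin{smallmatrix} f \\ 0 \end{smallmatrix}\right]$ and $\left[\begin{smallmatrix} g \\ 0 \end{smallmatrix}\right]$ are supported in $K$, finite propagation speed gives $\sppt(U(t)) \subset K_{\alpha|t|}$, whence $\sppt(F(t)) \subset K_{\alpha|t|}$.

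The principal subtlety is extracting the geometric mean $\alpha = (\|a\|_\infty\|A\|_\infty)^{1/2}$ rather than the na\"ive $\max(\|a\|_\infty,\|A\|_\infty)$: this relies on the rescaling freedom $B \mapsto B_\beta$ leaving $L$ unchanged, a feature specific to the scalar multiplicative structure of $a$ in front of $\Div A\nabla$. A secondary point is handling non-Hermitian $A$, in which case one splits $B_\beta$ into Hermitian and antisymmetric parts and invokes Lemma~\ref{Lem: Group.Pert}; the commutator $[\eta I, B_\beta D] = B_\beta[\eta I, D]$ is still controlled by $\|B_\beta\|_\infty \|\eta\|_{\Lip}$ (and the double commutator vanishes, as $[\eta I,D]$ is a multiplication operator), so the propagation-speed constant supplied by Theorem~\ref{Thm: MainThm2} retains the desired form.
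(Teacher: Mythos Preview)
Your proof is essentially the same as the paper's: both rescale the multiplication operator to $B_\beta=\left[\begin{smallmatrix}\beta a&0\\0&\beta^{-1}A\end{smallmatrix}\right]$ (the paper writes this as $\left[\begin{smallmatrix}a/\beta&0\\0&\beta A\end{smallmatrix}\right]$, i.e.\ with the reciprocal convention), optimise $\beta$ to obtain $\|B_\beta\|_\infty=\alpha$, apply Case~II to get $\kappa_{B_\beta D}\le\alpha$, and then read off $\cos(t\sqrt L)$ as the first diagonal block of $\cos(tB_\beta D)$. One small point: your parenthetical about non-Hermitian $A$ is unnecessary here, since the hypothesis $\langle A(x)\zeta,\zeta\rangle\ge\lambda|\zeta|^2$ for all $\zeta\in\C^n$ forces $A$ to be pointwise Hermitian, so Case~II applies directly and the sharp bound $\kappa_{B_\beta D}\le\|B_\beta\|_\infty$ holds without any perturbation argument (which would in fact degrade the constant).
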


\begin{proof} Apply Case II with $B= \left[\begin{array}{cc}
 \tfrac a\beta&0\\0&\beta A
\end{array}\right]$ and $\beta=\left(\frac{\|a\|_\infty}{\|A\|_\infty}\right)^{1/2}$. Then \vspace{-.1cm}
$$ BD=\left[\begin{array}{cc}
 \tfrac a\beta&0\\0&\beta A
\end{array}\right] \left[\begin{array}{cc}
0&-\Div \\  \nabla &0 \end{array}\right]  =  \left[\begin{array}{cc}
0&-\tfrac a\beta\Div \\  \beta A\nabla &0 \end{array}\right]\vspace{-.1cm}
$$ and $(BD)^2= \left[\begin{array}{cc}
L &0 \\  0 &\tilde L\end{array}\right] $ with $L$ as above and $\tilde L= -A\nabla a\Div$.
From Case II, the $C_0$ group $(e^{itBD})_{t\in\R}$ has finite propagation speed  $\kappa_{BD} \leq\|B\|_\infty =\alpha$. On defining $\cos(tBD)=\frac12(e^{itBD}+e^{-itBD})$, it is clear that
the cosine family\vspace{-.1cm}
$$(\cos(tBD))_{t\geq0}= (\cos(t\sqrt{(BD)^2}))_{t\geq0}= \left(\left[\begin{array}{cc}
\cos(t\sqrt{L}) &0 \\  0 &\cos(t\sqrt{\tilde L}\,)\end{array}\right] \right)_{t\geq0}$$ has the same bound on its propagation speed. It follows that the first component $(\cos(t\sqrt{L}))_{t\geq0}$, acting on $L^2(\R^n)$,  has the same bound $\alpha$ on its propagation speed,  as required.\end{proof}

\begin{remark}\label{rem:cosine} The operators $\cos(tBD)$ agree with those defined in the standard functional calculus of the self-adjoint operator $BD$ acting on $L^2(\R^n,\C^{1+n})$ with inner product $(u,v)_B = (B^{-1}u,v)$.
Their properties are those of cosine functions as presented, for example, in ~\cite{ArendtBatty,ArendtBattyHieberNeubrander2001}. Note that the use of the square root sign is purely a symbolism to express the fact that the cosine operators $\cos(tBD)$  are diagonal. The function $\cos(t\sqrt{z})$ is an analytic function of $z$. \end{remark}

\begin{remark} When $a=1$, then $L$ is the self-adjoint operator in $L^2(\R^n)$ associated with the sesquilinear form 
$J_A: W^{1,2}(\R^n)\times W^{1,2}(\R^n)\rightarrow \C$  defined by\vspace{-.1cm}
\begin{equation*}\label{eq: homJA}
J_A(f,g) = \int_{\R^n} \sum_{j,k=1}^n A_{jk}(x) (\partial_k f(x)) \partial_j \overline{g}(x)\,dx
\end{equation*}
for all $f$, $g\in W^{1,2}(\R^n)$. See, for example, Chapter IV of~\cite{Kato}. The weak Huygens' principle is  well known for these operators. See, for example, \cite{Sik}. Degenerate elliptic operators are also treated in \cite{Rob}.
\end{remark}

\begin{remark} Our methods work also for systems where the functions are  $\C^N$-valued for some $N \in \N$, and  $A \in L^\infty(\C^{nN})$ satisfies G\aa rding's inequality:
\begin{equation*}\label{eq: homacc}
 J_A(f,f) \geq \lambda \|\nabla f\|_{L^2(\R^n,\C^{nN})}^2\quad\forall f\in W^{1,2}(\R^n,\C^N)\ .
\end{equation*}
The proof that $iBD$ generates a $C_0$ group needs to be modified as follows.

 The positivity condition on $B\in L^\infty(\R^n,\cL(\C^{N+nN}))$ is weakened to $(BDu,\!Du)\!\geq \lambda\|u\|^2$ for all $u\in\Dom(D)$. Then, following \cite{AAMc}, $L^2(\R^n,\C^{N+nN})=\Nul(D)\oplus\overline{\Ran(BD)}$ with corresponding projections $\Proj_{\Nul}$ and $\Proj_{\Ran}$. (Here $\Nul(D)$ denotes the nullspace of $D$ and $\Ran(BD)$ denotes the range of $BD$) The projections, which are typically non-orthogonal, commute with $BD$. Moreover $B:\overline{\Ran(D)}\to \overline{\Ran(BD)}$ has a bounded inverse $B^{-1}:\overline{\Ran(BD)}\to \overline{\Ran(D)}$. The operator  $BD$ is self-adjoint  in $L^2(\R^n,\C^{N+nN})$ under the  inner product $(u\,,v)_B:=(\Proj_\Nul u\,, \Proj_\Nul v)+(B^{-1}\Proj_\Ran u\,,\Proj_\Ran v)$, whose associated norm $\|u\|_B$ is equivalent to $\|u\|$. Thus $iBD$ generates a $C_0$ group. Proceed as in Case II, though note that the bound on $\kappa_{BD}$ is not the same as before.
 \end{remark}

Finally we consider inhomogeneous hyperbolic equations on a domain $\Omega\subset\R^n$. 
As in Case III,  suppose that $V$ is a closed subspace of $W^{1,2}(\Omega)$ which contains 
$C^\infty_c(\Omega)$, and which has the property that $\eta f\in V$ for all $\eta\in\Lip(\Omega)$ and $f\in V$.
Suppose also  that $a, A_{jk}\in L^\infty(\R^n)\ (j,k=0,\dots,n)$ 
 with $a(x)\geq \lambda>0$ and $\sum_{j,k=1}^nA_{jk}\zeta_k\overline\zeta_j\geq\lambda|\zeta|^2$
for a.e.~$x\in\Omega$ and all $\zeta\in\C^n$. 

Define $\nabla_V:V\subset L^2(\Omega)\to L^2(\Omega,\C^n)$ by $\nabla_Vf=(\partial_j f)_j$, and set ${\Div_V = -{\nabla_V}^*}$. That is, $\Div_Vu=\Div u$ for all $u\in\Dom(\Div_V)=\{u\in L^2(\Omega,\C^n)\,;\, \Div u \in L^2(\Omega)$  and  $(-\nabla f\,,u)=(f\,,\Div u) \ \forall f\in V\}$.

Define the operator $L$ in $L^2(\Omega)$  by 
\begin{align*} Lu & =-a\sum_{j,k=1}^n\partial_jA_{jk}\partial_ku  
 -a\sum_{j=1}^n\partial_jA_{j0}u +a\sum_{k=1}^nA_{0k}\partial_ku+aA_{00}u\\
&=a\left[\begin{array}{cc}
I&-\Div_V \end{array}\right]\left[\begin{array}{cc}
A_{00}&(A_{0k})\\(A_{j0})&(A_{jk})
\end{array}\right]\left[\begin{array}{c}
I\\ \nabla_V
\end{array}\right]u
\end{align*}
with
$\Dom(L) = \{u\in V\,;\, (\sum_{k=1}^n A_{jk}\partial_ku + A_{j0}u)_j\in\Dom(\Div_V)\}$.

\begin{theorem}\label{inhom} If $f \in V$, $g\in L^2(\Omega)$ with $\sppt(f) \cup\sppt(g)\subset K \subset \Omega$, then the solution
 \begin{equation}\label{expression} F(t) = \cos(t\sqrt{L}) f+\int_0^t\cos(s\sqrt{L})g\,d s 
 \end{equation} of the Cauchy problem 
 \begin{align*}
 \tfrac{\partial^2}{\partial t^2}F(t) +L F(t) &= 0\qquad (t>0)\\
 \lim_{t\to0} F(0)&=f\\
\lim_{t\to0} \tfrac{\partial}{\partial t} F(0) &= g
 \end{align*} 
 has support $\sppt(F(t))\subset K_{\tilde\alpha t}:=\{x\in \Omega\,;\,\dist(x,K)\leq \tilde\alpha t\}$ for some finite $\tilde\alpha$. \end{theorem}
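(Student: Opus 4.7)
The plan is to mirror the proof of Theorem \ref{hom}, replacing Case II with Case III so that the lower-order coefficients $A_{00}, A_{0k}, A_{j0}$ and the constraints encoded by $V$ are absorbed into the first-order system $BD$ constructed there. By Case III, $iBD$ generates a $C_0$ group on $L^2(\Omega,\C^{2+n})$ satisfying the commutator bound of Theorem \ref{Thm: MainThm2}, so $(e^{itBD})_{t\in\R}$ has finite propagation speed $\tilde\alpha := \kappa_{BD}$ for some finite $\tilde\alpha \geq 0$. I would then define the cosine family $\cos(tBD) := \tfrac 1 2(e^{itBD}+e^{-itBD})$, which automatically inherits the bound $\tilde\alpha$ on its propagation speed.

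The next step is to identify $L$ inside this first-order calculus. The operator $D$ from Case III is block off-diagonal with respect to the splitting $L^2(\Omega)\oplus L^2(\Omega,\C^{n+1})$, and the same is therefore true of $BD$; conjugation by $J:=\mathrm{diag}(I,-I)$ negates $BD$ and so fixes $\cos(tBD)$, which consequently is block diagonal. A direct matrix computation gives
\[(BD)^2 \;=\; \begin{pmatrix} L & 0 \\ 0 & \tilde L \end{pmatrix},\]
where the upper-left entry reproduces the operator $L$ of the theorem (obtained by applying the scalar row $(a,\,-a\Div_V)$ to the column $(A_{00} + \sum_k A_{0k}\partial_k,\,(A_{j0} + \sum_k A_{jk}\partial_k)_j)^\top$). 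Following the symbolism of Remark \ref{rem:cosine}, I would then \emph{define} $\cos(t\sqrt{L})$ to be the $(1,1)$-block of $\cos(tBD)$; this is unambiguous even though $L$ is not self-adjoint in general because of the lower-order terms.

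To finish, given $f\in V$ and $g\in L^2(\Omega)$ with $\sppt(f)\cup\sppt(g)\subset K$, form the vectors $\mathbf f := (f,0,0)^\top$ and $\mathbf g := (g,0,0)^\top$, each supported in $K$. By the block-diagonality established above, $\cos(tBD)\mathbf f = (\cos(t\sqrt L)f,0,0)^\top$ and likewise for $\mathbf g$. Finite propagation speed of $\cos(tBD)$ then yields $\sppt(\cos(tBD)\mathbf f)\subset K_{\tilde\alpha t}$ and $\sppt(\cos(sBD)\mathbf g)\subset K_{\tilde\alpha s}\subset K_{\tilde\alpha t}$ for every $s\in[0,t]$, so integrating in $s$ and extracting the first component gives $\sppt(F(t))\subset K_{\tilde\alpha t}$. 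The only obstacle is bookkeeping: carefully verifying the $(1,1)$-entry of $(BD)^2$ and the block-diagonality of $\cos(tBD)$; all of the functional-analytic input (group generation, commutator bound, and finite propagation speed) has already been packaged into Case III and Theorem \ref{Thm: MainThm2}.
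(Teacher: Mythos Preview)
Your proposal is correct and follows essentially the same route as the paper's proof: invoke Case~III to obtain finite propagation speed $\tilde\alpha=\kappa_{BD}$ for $(e^{itBD})_{t\in\R}$, pass to the cosine family, observe that $(BD)^2$ is block diagonal with $L$ in the $(1,1)$-entry, and read off the conclusion for the first component. Your $J$-conjugation argument for block-diagonality of $\cos(tBD)$ and the explicit handling of the integral term $\int_0^t\cos(sBD)\mathbf g\,ds$ in fact supply details that the paper leaves implicit.
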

 
 \begin{proof} With $B$ and $D$ specified as in Case III, we have shown that $iBD$ generates a $C_0$ group $(e^{itBD})_{t\in\R}$ with finite propagation speed that we now denote by $\tilde\alpha$. The cosine family $\cos(tBD) = \frac12(e^{itBD}+e^{-itBD}) = \cos(t\sqrt{(BD)^2})$ ($t\geq0$) has the same propagation speed. Noting that 
 $$(BD)^2 = \left[\begin{array}{ccc}L&0&0\\0&\tilde L_{00}&(\tilde L_{0k})\\0&(\tilde L_{j0})&(\tilde L_{jk})
\end{array} \right] 
 $$ for suitable operators $\tilde L_{jk}$, it follows that the first component $(\cos(t\sqrt{L}))_{t\geq0}$, acting on $L^2(\R^n)$,  has  finite propagation speed bounded by $\tilde\alpha$.  The result follows.
  \end{proof}

\begin{remark}
The use of the square root symbol is again purely symbolic, as $\cos(t\sqrt{L})$ is really just the leading component of $\cos(tBD)$. It is not the case in general that an operator $\sqrt{L}$ is defined. See Remark~\ref{rem:cosine}. In the language of cosine families, it is said that $(BD)^2$, and hence~$L$, are generators of their respective cosine families. In the usual treatment of cosine families associated with $L$, one adds a positive constant to $A_{00}$ if necessary to ensure that $\re J_A(f,f)\geq \|\nabla f\|_2^2 + \|u\|_2^2 $, and notes that the numerical range of $A$ is contained in a parabola. This ensures that $L$ generates a cosine family \cite{Haase}, and that
$\|\sqrt{L}f\|_2\approx \|\nabla f\|_2^2 + \|f\|_2^2 $ \cite{Sqrt}. See also~\cite{ArendtBatty}. Our treatment is consistent with this approach, but we do not need to apply it, as we use the more straightforward fact that, since $iBD$ generates a $C_0$ group, then $(BD)^2$ generates a cosine family, and so then does the restriction to its first component $L$ in $L^2(\R^n)$. 
\end{remark}

\begin{remark}
Given $f\in V$ and $g\in L^2(\R^n)$, the solution $F$ of the Cauchy problem satisfies $F\in  C^1(\R^+,L^2(\R^n))\cap C^0(\R^+, V)$
with
$$\|F(t)\|_2+\|\nabla F(t)\|_2+\|\partial_t F(t)\|_2 \leq c(1+t)e^{\tilde \omega t}\{  \| f\|_2+\|\nabla f\|_2 +\|g\|_2\}
$$ for all $t>0$ and some constant $c$. Let us prove this using what we know about the operator $BD$ in Case III. The constant $\tilde\omega$ is the same one as there.

When $u\in L^2(\R^n,\C^{2+n})$, then $e^{itBD}u \in C^0(\R,L^2(\R^n,\C^{2+n}))$, and when
 $u\in \Dom(D) =\Dom(BD)$, then $e^{itBD}u \in \! C^1(\R,L^2(\R^n,\C^{2+n}))\cap C^0(\R,\Dom(D))$. Hence a similar statement holds for $\cos(tBD)$. On restricting to the first component, we deduce that 
 $\cos(t\sqrt{L})f\in C^1(\R^+,L^2(\R^n))\cap C^0(\R^+,V)$
 with 
 $$\|\cos(t\sqrt{L})f\|_2+\|\nabla\cos(t\sqrt{L})f\|_2+\|\tfrac\partial{\partial t}\cos(t\sqrt{L})f\|_2 \leq  c_1e^{\tilde\omega t}(\|f\|_2+\|\nabla f\|_2)$$ for all $t>0$ and some constant $c_1$.

Next, since $\frac\partial{\partial t}\int_0^t\cos(s\sqrt{L})g\,ds= \cos(t\sqrt{L})g$, we see that $\int_0^t\cos(s\sqrt{L})g\,ds$ is in $C^1(\R^+,L^2(\R^n))$
 with 
 $$\|\int_0^t\cos(s\sqrt{L})g\,ds\|_2 +
 \|\tfrac\partial{\partial t}\int_0^t\cos(s\sqrt{L})g\,ds\|_2\leq \tilde c(1+t)  e^{\tilde\omega t} \|g\|_2\ .$$
 Finally, letting $v:=\left[\begin{array}{c}g\\0\\0\end{array}\right]$, we obtain $\int_0^t\cos(s\sqrt{L})g\,ds \in C^0(\R^+,V)$ with
 \begin{align*}\|\nabla \int_0^t &\cos(s\sqrt L)g\,ds\| _2 \leq \|\tilde B^{-1}\|\|\tilde B D \int_0^t \cos(sBD)v\,ds\| _2\\
 &\leq 2\lambda^{-1}(\| B D \int_0^t \cos(sBD)v\,ds\| _2 +\|C\|\|\int_0^t \cos(sBD)v\,ds\| _2)\\
 &\leq 2\lambda^{-1}(\| \sin(tBD)v\| _2 +\|C\|\tilde cte^{\tilde\omega t}\|v\| _2)
 \leq c_2(1+t)e^{\tilde\omega t}\|g\| _2
 \end{align*}
 for all $t>0$ and some constant $c_2$.
  On using these bounds in \eqref{expression}, we obtain the required estimate. (When $B$ is invertible, then the term $(1+t)$ is not needed.)
\end{remark}

\begin{remark} When $a=1$, then $L$ is the operator in $L^2(\Omega)$ associated with the sesquilinear form 
$J_A: V \times V\rightarrow \C$  defined by
\begin{align*}\label{eq: inhomKA}
J_{A}(f,g) :&= \int_{\Omega} \sum_{j,k=1}^n A_{jk} (\partial_k f) \partial_j \overline{g}
+\sum_{j=1}^n A_{j0}f\,\partial_j\overline{g}
+\sum_{k=1}^n A_{0k}(\partial_kf) \overline{g}
+A_{00}f \overline{g}\\
&=(A\left[\begin{array}{c}I\\ \nabla\end{array}\right]f,\left[\begin{array}{c}I\\ \nabla\end{array}\right]g)
\end{align*}
for all $f$, $g\in V$. See, for instance, Chapter IV in~\cite{Kato}.
\end{remark}

\begin{remark} The choice of $V=W_0^{1,2}(\Omega)$ gives $L$ with Dirichlet boundary conditions, whilst $V=W^{1,2}(\Omega)$ gives $L$ with Neumann boundary conditions, though usually it is assumed in this case that the boundary of $\Omega$ is at least Lipschitz. When $V$ consists of all  functions in $W^{1,2}(\Omega)$ which are zero on part of a Lipschitz  boundary, then the corresponding operator $L$ satisfies mixed boundary value conditions. In \cite{AKMc2}, the authors obtained Davies--Gaffney estimates for the resolvents of such operators under similar hypotheses as here, and in a related fashion.
\end{remark}

\section*{Acknowledgements}\vspace{-.2cm}
The authors acknowledge support from, respectively, the Centre for Mathematics and its Applications (CMA) at the Australian National University, and the Department of Mathematics at the University of Missouri. The authors were also supported by the Australian Government through the Australian Research Council. In particular, the research was conducted during the second author's appointment as a Visiting Fellow at the CMA. The authors thank Tom ter Elst, Andrew Hassell, Sylvie Monniaux and Adam Sikora for helpful conversations and suggestions.

\bibliographystyle{amsplain}

\end{document}